\numberwithin{equation}{section}
\definecolor{webgreen}{rgb}{0,.5,0}
\definecolor{webbrown}{rgb}{.6,0,0}
\newtheorem{thm}{Theorem}
\newtheorem{theorem}[thm]{Theorem}
\newtheorem{lemma}{Lemma}
\newtheorem{corollary}[thm]{Corollary}
\title{Partial sums and generating functions for powers of second order sequences with indices in arithmetic progression}
\author[]{Kunle Adegoke \\\href{mailto:kunle.adegoke@yandex.com}{\tt kunle.adegoke@yandex.com}}
\affil{Department of Physics and Engineering Physics, \mbox{Obafemi Awolowo University}, 220005 Ile-Ife, Nigeria}
\begin{document}
\date{}

\maketitle

\begin{abstract} \noindent The sums $\sum_{j = 0}^k {u_{rj + s}^{2n}z^j }$, $\sum_{j = 0}^k {u_{rj + s}^{2n-1}z^j }$, $\sum_{j = 0}^k {v_{rj + s}^{n}z^j }$ and $\sum_{j = 0}^k {w_{rj + s}^{n}z^j }$ are evaluated; where $n$ is any positive integer, $r$, $s$ and $k$ are any arbitrary integers, $z$ is arbitrary, $(u_i)$ and $(v_i)$ are the Lucas sequences of the first kind, and of the second kind, respectively; and $(w_i)$ is the Horadam sequence. Pantelimon St\u anic\u a set out to evaluate the sum $\sum_{j = 0}^k {w_j^n z^j }$. His solution is not complete because he made the assumption that $w_0=0$, thereby giving effectively only the partial sum for $(u_i)$, the Lucas sequence of the first kind. 
\end{abstract}
\section{Introduction}
The Horadam sequence \cite{horadam65} $(w_n) = \left(w_n(a,b; p, q)\right)$ is defined, for all integers, by the recurrence relation
\begin{equation}\label{eq.vhrb5b3}
w_0  = a,\,w_1  = b;\,w_n  = pw_{n - 1}  - qw_{n - 2}\, (n \ge 2)\,;
\end{equation}
with
\begin{equation}
w_{ - n}  = \left( {\frac{{(ap - b)u_n  - aqu_{n - 1} }}{{bu_n  - aqu_{n - 1} }}} \right)q^{ - n} w_n\,,
\end{equation}
or, equivalently,
\begin{equation}\label{eq.xzxwnfu}
w_{ - n}  = q^{-n}(av_n  - w_n)\,;
\end{equation}
where $a$, $b$, $p$ and $q$ are arbitrary complex numbers, with $p\ne 0$ and $q\ne 0$; and $(u_n(p,q))=(w_n(0,1;p,q))$ and $(v_n(p,q))=(w_n(2,p;p,q))$ are Lucas sequences of the first kind, and of the second kind, respectively. The most well-known Lucas sequences are the Fibonacci sequence, $(f_n)=(u_n(1,-1))$ and the sequence of Lucas numbers, $(l_n)=(v_n(1,-1))$.

\medskip

Denote by $\alpha$ and $\beta$, with $\alpha\ne\beta$, the zeros of the characteristic polynomial $x^2-px+q$ of the Horadam and Lucas sequences. Then
\begin{equation}\label{eq.ufpxoag}
\alpha=\frac{p+\sqrt{p^2-4q}}2,\quad \beta=\frac{p-\sqrt{p^2-4q}}2\,,
\end{equation}
\begin{equation}\label{eq.dv3pphj}
\alpha+\beta=p,\quad\alpha-\beta=\sqrt{p^2-4q}\quad\mbox{and }\alpha\beta=q\,.
\end{equation}
The Binet-like formulas for $u_n$, $v_n$ and $w_n$ are
\begin{equation}\label{eq.djr1ak1}
u_n=\frac{\alpha^n-\beta^n}{\alpha-\beta},\quad v_n=\alpha^n+\beta^n\,,
\end{equation}
and
\begin{equation}\label{eq.ffgsygd}
w_n = A\alpha ^n  + B\beta ^n\,,
\end{equation}
where
\begin{equation}
A=\frac{{b - a\beta}}{{\alpha  - \beta }},\quad B=\frac{{a\alpha  - b}}{{\alpha  - \beta }}\,.
\end{equation}
Properties of Lucas sequences can be found in the book by Ribenboim \cite[Chapter 1]{ribenboim}. The Mathworld \cite{mathworld_lucas} and Wikipedia \cite{wiki_lucas} articles are also good sources of information on the subject, with many references to useful materials. The books by Koshy \cite{koshy} and by Vajda \cite{vajda} are excellent reference materials on Fibonacci numbers and Lucas numbers.

\medskip

St\u anic\u a \cite{stanica03} set out to evaluate the sum $\sum_{j = 0}^k {w_j^n z^j }$. His solution is not complete because he made the assumption (in our notation) that $w_0=a=0$, thereby giving effectively only the partial sum for $(u_i)$, the Lucas sequence of the first kind. 

\medskip

Our main goal in this paper is to establish the following result, for $n$ a positive integer and $r$, $s$ and $k$ arbitrary integers:
\[
\begin{split}
&2\sum\limits_{j = 0}^k {w_{rj+s}^n z^j } \\
&\quad=z^{k + 2}\sum\limits_{i = 0}^n {(AB)^iq^{si}\binom ni\frac{{A^{n - 2i} \alpha^{s(n - 2i) + rn(k + 1) - rik} \beta^{r(ik + n)}  + B^{n - 2i} \alpha^{r(ik + n)} \beta^{s(n - 2i) + rn(k + 1) - rik} }}{{q^{rn} z^2  - q^{ri} v_{r(n - 2i)}z + 1}} } \\
&\qquad-z^{k + 1}\sum\limits_{i = 0}^n {(AB)^iq^{si}\binom ni\frac{{A^{n - 2i} \alpha^{s(n - 2i) + (rn - ri)(k + 1)} \beta^{ri(k + 1)}  + B^{n - 2i} \alpha^{ri(k + 1)} \beta^{s(n - 2i) + (rn - ri)(k + 1)} }}{{q^{rn} z^2  - q^{ri} v_{r(n - 2i)}z + 1}} } \\
&\qquad-z\sum\limits_{i = 0}^n {(AB)^iq^{si}\binom ni\frac{{A^{n - 2i} \alpha^{s(n - 2i) + ri} \beta^{r(n - i)}  + B^{n - 2i} \alpha^{r(n - i)} \beta^{s(n - 2i) + ri} }}{{q^{rn} z^2  - q^{ri} v_{r(n - 2i)}z + 1}}} \\
&\quad\qquad+\sum\limits_{i = 0}^n {(AB)^iq^{si}\binom ni\frac{{A^{n - 2i} \alpha^{s(n - 2i)}  + B^{n - 2i} \beta^{s(n - 2i)} }}{{q^{rn} z^2  - q^{ri} v_{r(n - 2i)}z + 1}}}\,.
\end{split}
\]
In particular, we will derive
\[
\sum_{j = 0}^k {w_{rj + s} z^j}  = \frac{{q^r w_{rk + s} z^{k + 2}  - w_{rk + r + s} z^{k + 1}  - q^r w_{s-r}z + w_s }}{{q^r z^2  - v_r z + 1}}\,,
\]
yielding immediately,
\[
\sum_{j = 0}^\infty {w_{rj + s} z^j}  = \frac{ w_s - q^r w_{s-r}z}{q^r z^2  - v_r z + 1}
\]
as the generating function of Horadam numbers with indices in arithmetic progression.

\medskip

We require the following two algebraic identities:
\begin{lemma}\label{lemma.sliph2a}
The following identity holds for arbitrary $f$, $g$, $x$, $y$ and $z$ and integers $k$, $r$ and~$s$:
\[
\begin{split}
&\sum_{j = 0}^k {(fx^{rj + s}  + g y^{rj + s} )z^j }\\
&\quad= \frac{(xy)^r (fx^{rk + s}  + g y^{rk + s} )}{ {(xy)^r z^2  - (x^r  + y^r )z + 1} }z^{k + 2}  - \frac{fx^{rk + r + s}  + g y^{rk + r + s} }{ {(xy)^r z^2  - (x^r  + y^r )z + 1} }z^{k + 1}\\ 
&\qquad\quad- \frac{(xy)^r (fx^{s - r}  + g y^{s - r} )}{ {(xy)^r z^2  - (x^r  + y^r )z + 1} }z + \frac{fx^s  + g y^s}{ {(xy)^r z^2  - (x^r  + y^r )z + 1} }\,.
\end{split}
\]
\end{lemma}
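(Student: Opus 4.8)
The plan is to reduce the left-hand side to two elementary finite geometric series and then recombine them over the common denominator that appears on the right. First I would split the summand by linearity:
\[
\sum_{j=0}^k (fx^{rj+s}+gy^{rj+s})z^j = fx^s\sum_{j=0}^k (x^r z)^j + gy^s\sum_{j=0}^k (y^r z)^j,
\]
so that each inner sum is a geometric progression with ratio $x^r z$ and $y^r z$ respectively. Applying the standard identity $\sum_{j=0}^k t^j=(1-t^{k+1})/(1-t)$ to each, I obtain
\[
fx^s\frac{1-(x^r z)^{k+1}}{1-x^r z}+gy^s\frac{1-(y^r z)^{k+1}}{1-y^r z}.
\]

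The key observation is that the quadratic appearing in every denominator on the right-hand side factors as $(xy)^r z^2-(x^r+y^r)z+1=(1-x^r z)(1-y^r z)$. Hence, after cross-multiplication, the two fractions above share exactly this denominator. I would therefore place both terms over $(1-x^r z)(1-y^r z)$, producing the single numerator
\[
fx^s\bigl(1-(x^r z)^{k+1}\bigr)(1-y^r z)+gy^s\bigl(1-(y^r z)^{k+1}\bigr)(1-x^r z).
\]

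It then remains to expand this numerator and sort the result by powers of $z$. The $z^0$ term collects to $fx^s+gy^s$; the $z^1$ term to $-(xy)^r(fx^{s-r}+gy^{s-r})$; the $z^{k+1}$ term to $-(fx^{rk+r+s}+gy^{rk+r+s})$; and the $z^{k+2}$ term to $(xy)^r(fx^{rk+s}+gy^{rk+s})$. Dividing each of these by the common denominator reproduces the four summands of the claimed formula in the order stated.

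The only genuine subtlety is that the geometric-series formula requires $x^r z\neq 1$ and $y^r z\neq 1$; since these values are precisely the zeros of the common denominator, the identity should be read either as holding wherever that denominator is nonzero, or as an identity of rational functions in $z$, with the degenerate cases recovered by continuity. I expect no real obstacle beyond the mechanical bookkeeping of the expansion, the crucial and essentially only nontrivial step being the recognition of the denominator factorization $(1-x^r z)(1-y^r z)$.
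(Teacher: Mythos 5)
Your proposal is correct and follows essentially the same route as the paper: the paper's proof likewise splits the sum into the two geometric progressions $\sum_j x^{rj+s}z^j$ and $\sum_j y^{rj+s}z^j$ and combines them over the common denominator $(x^rz-1)(y^rz-1)=(xy)^rz^2-(x^r+y^r)z+1$. You have simply written out the expansion and bookkeeping that the paper leaves implicit.
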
\label{lemma.hb5fyhe}
\begin{proof}
The identity expresses the linear combination of the following geometric progression summation identities:
\begin{equation}\label{eq.ahuskzm}
\sum_{j = 0}^k {x^{rj + s} z^j }  = \frac{{x^{rk + r + s} z^{k + 1}  - x^s }}{{x^r z - 1}}\,,
\end{equation}
\begin{equation}\label{eq.o3wjsin}
\sum_{j = 0}^k {y^{rj + s} z^j }  = \frac{{y^{rk + r + s} z^{k + 1}  - y^s }}{{y^r z - 1}}\,.
\end{equation}
\end{proof}
Dropping terms proportional to $z^k$, in the limit as $k$ approaches infinity, we have
\begin{equation}\label{eq.rrfnlqs}
\sum_{j = 0}^\infty {(fx^{rj + s}  + g y^{rj + s} )z^j }=\frac{fx^s  + g y^s - (xy)^r (fx^{s - r}  + g y^{s - r} )z}{{(xy)^r z^2  - (x^r  + y^r )z + 1}}\,.
\end{equation}
\begin{lemma}\label{lemma.l8djsvx}
The following identity holds for arbitrary $f$, $g$, $x$, $y$ and $z$ and integers $k$, $r$ and~$s$ and nonnegative integer $n$:
\[
\begin{split}
&2\sum\limits_{j = 0}^k {(fx^{rj + s}  + gy^{rj + s} )^n z^j } \\
&\quad=z^{k + 2}\sum\limits_{i = 0}^n {(fg)^i(xy)^{si}\binom ni\frac{{f^{n - 2i} x^{s(n - 2i) + rn(k + 1) - rik} y^{r(ik + n)}  + g^{n - 2i} x^{r(ik + n)} y^{s(n - 2i) + rn(k + 1) - rik} }}{{(xy)^{rn} z^2  - (xy)^{ri} (x^{r(n - 2i)}  + y^{r(n - 2i)} )z + 1}} } \\
&\qquad-z^{k + 1}\sum\limits_{i = 0}^n {(fg)^i(xy)^{si}\binom ni\frac{{f^{n - 2i} x^{s(n - 2i) + (rn - ri)(k + 1)} y^{ri(k + 1)}  + g^{n - 2i} x^{ri(k + 1)} y^{s(n - 2i) + (rn - ri)(k + 1)} }}{{(xy)^{rn} z^2  - (xy)^{ri} (x^{r(n - 2i)}  + y^{r(n - 2i)} )z + 1}} } \\
&\qquad-z\sum\limits_{i = 0}^n {(fg)^i(xy)^{si}\binom ni\frac{{f^{n - 2i} x^{s(n - 2i) + ri} y^{r(n - i)}  + g^{n - 2i} x^{r(n - i)} y^{s(n - 2i) + ri} }}{{(xy)^{rn} z^2  - (xy)^{ri} (x^{r(n - 2i)}  + y^{r(n - 2i)} )z + 1}}} \\
&\quad\qquad+\sum\limits_{i = 0}^n {(fg)^i(xy)^{si}\binom ni\frac{{f^{n - 2i} x^{s(n - 2i)}  + g^{n - 2i} y^{s(n - 2i)} }}{{(xy)^{rn} z^2  - (xy)^{ri} (x^{r(n - 2i)}  + y^{r(n - 2i)} )z + 1}}}\,.
\end{split}
\]
\end{lemma}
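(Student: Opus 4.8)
The plan is to reduce the entire statement to the single-power identity of Lemma~\ref{lemma.sliph2a} by way of the binomial theorem. Writing $A_j = fx^{rj+s}$ and $B_j = gy^{rj+s}$, the left-hand side is $2\sum_{j=0}^k (A_j + B_j)^n z^j$. The first step is to record the symmetrized binomial expansion
\[
2(A_j + B_j)^n = \sum_{i=0}^n \binom{n}{i}\bigl(A_j^{n-i}B_j^i + A_j^i B_j^{n-i}\bigr),
\]
which holds because each of the two sums $\sum_{i}\binom{n}{i}A_j^{n-i}B_j^i$ and $\sum_{i}\binom{n}{i}A_j^i B_j^{n-i}$ separately equals $(A_j+B_j)^n$, the second after the reindexing $i \mapsto n-i$. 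This is precisely the origin of the factor $2$ on the left-hand side of the claim, and it lets the index $i$ run over the full range $0,\dots,n$ rather than only half of it.

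The key structural step is to recognize each summand as a single geometric-type object in $j$. For fixed $i$,
\[
A_j^{n-i}B_j^i = f^{n-i}g^i\,(x^{n-i}y^i)^{rj+s}, \qquad A_j^i B_j^{n-i} = f^i g^{n-i}\,(x^i y^{n-i})^{rj+s},
\]
so on setting $\tilde f = f^{n-i}g^i$, $\tilde g = f^i g^{n-i}$, $X = x^{n-i}y^i$ and $Y = x^i y^{n-i}$, the inner $j$-sum becomes $\sum_{j=0}^k (\tilde f\, X^{rj+s} + \tilde g\, Y^{rj+s})z^j$, to which Lemma~\ref{lemma.sliph2a} applies verbatim. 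I would then interchange the finite sums over $i$ and $j$ and apply that lemma inside each term.

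Next I would simplify the common denominator. Since $XY = (xy)^n$ and $X^r + Y^r = (xy)^{ri}(x^{r(n-2i)} + y^{r(n-2i)})$, the quantity $(XY)^r z^2 - (X^r + Y^r)z + 1$ furnished by Lemma~\ref{lemma.sliph2a} equals $(xy)^{rn}z^2 - (xy)^{ri}(x^{r(n-2i)} + y^{r(n-2i)})z + 1$ for every $i$, which is exactly the common denominator appearing in the statement. Finally, I would match the four numerators by factoring $(fg)^i(xy)^{si}$ out of $\tilde f$, $\tilde g$ and the relevant powers of $X,Y$; for the constant term this reads $\tilde f\, X^s + \tilde g\, Y^s = (fg)^i(xy)^{si}\bigl(f^{n-2i}x^{s(n-2i)} + g^{n-2i}y^{s(n-2i)}\bigr)$, and weighting by $\binom{n}{i}$ and summing reproduces the last line of the claim, with the other three lines handled identically.

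The main labor, and the place most prone to error, is the exponent bookkeeping in the $z^{k+2}$ and $z^{k+1}$ terms, where the powers of $X$ and $Y$ carry the factors $(xy)^{rn}$ and the shifts $rk$, $rk+r$. Concretely, after extracting $(fg)^i(xy)^{si}$ one must verify algebraic identities such as $rn + (n-i)(rk+s) - si = s(n-2i) + rn(k+1) - rik$ for the $x$-exponent and $rn + i(rk+s) - si = r(ik+n)$ for the $y$-exponent in the $z^{k+2}$ term, together with the analogous pair $s(n-2i)+(rn-ri)(k+1)$ and $ri(k+1)$ for the $z^{k+1}$ term and $s(n-2i)+ri$, $r(n-i)$ for the $z^1$ term. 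These checks are elementary but must be carried through carefully for all four numerators; once they are confirmed, the four resulting sums over $i$ coincide term by term with those in Lemma~\ref{lemma.l8djsvx}, completing the proof.
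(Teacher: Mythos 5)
Your proposal is correct and follows essentially the same route as the paper: the symmetrized binomial expansion supplies the factor of $2$, and each $i$-term is then recognized as a two-term geometric-type sum in $j$. The only cosmetic difference is that you invoke Lemma~\ref{lemma.sliph2a} with $X=x^{n-i}y^{i}$, $Y=x^{i}y^{n-i}$ to package the final algebra, whereas the paper sums the two geometric series directly and leaves the recombination over the common denominator implicit.
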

\begin{proof}
By the binomial theorem and a change of the order of summation, we have
\begin{equation}\label{eq.bycqcom}
\sum\limits_{j = 0}^k {(fx^{rj + s}  + gy^{rj + s} )^{n} z^j }  = \sum\limits_{i = 0}^{n} {\binom{n}i\sum\limits_{j = 0}^k {\left(fx^{rj + s}\right)^i \left(gy^{rj + s}\right)^{(n - i)} z^j } }\,,
\end{equation}
which, by interchanging $i$ and $n-i$ in the summand on the right hand side, can also be written
\begin{equation}\label{eq.ix4pc01}
\sum\limits_{j = 0}^k {(fx^{rj + s}  + gy^{rj + s} )^{n} z^j }  = \sum\limits_{i = 0}^{n} {\binom{n}i\sum\limits_{j = 0}^k {\left(fx^{rj + s}\right)^{(n - i)} \left(gy^{rj + s}\right)^i z^j } }\,.
\end{equation}
Adding \eqref{eq.bycqcom} and \eqref{eq.ix4pc01}, we have
\[
\begin{split}
&2\sum\limits_{j = 0}^k {(fx^{rj + s}  + gy^{rj + s} )^n z^j }\\
&\quad = \sum\limits_{i = 0}^n {\binom ni\sum\limits_{j = 0}^k {(fgx^{rj + s} y^{rj + s} )^i \left( {(fx^{rj + s} )^{n - 2i}  + (gy^{rj + s} )^{n - 2i} } \right)z^j } }\\ 
&\quad= \sum\limits_{i = 0}^n {(fg)^i (xy)^{si} \binom ni\left( {(fx^s )^{n - 2i} \sum\limits_{j = 0}^k {(x^{r(n - i)} y^{ri} z)^j }  + (gy^s )^{n - 2i} \sum\limits_{j = 0}^k {(x^{ri} y^{r(n - i)} z)^j } } \right)}\\ 
&\quad= \sum\limits_{i = 0}^n {(fg)^i (xy)^{si} \binom ni\left( {(fx^s )^{n - 2i} \frac{{(x^{r(n - i)} y^{ri} z)^{k + 1}  - 1}}{{x^{r(n - i)} y^{ri} z - 1}} + (gy^s )^{n - 2i} \frac{{(x^{ri} y^{r(n - i)} z)^{k + 1}  - 1}}{{x^{ri} y^{r(n - i)} z - 1}}} \right)}\,, 
\end{split}
\]
from which the identity of the theorem follows.
\end{proof}
Dropping terms proportional to $z^k$, in the limit as $k$ approaches infinity, we have
\begin{equation}\label{eq.yjgu05n}
\begin{split}
&2\sum\limits_{j = 0}^\infty {(fx^{rj + s}  + gy^{rj + s} )^n z^j } \\
&\qquad-z\sum\limits_{i = 0}^n {(fg)^i(xy)^{si}\binom ni\frac{{f^{n - 2i} x^{s(n - 2i) + ri} y^{r(n - i)}  + g^{n - 2i} x^{r(n - i)} y^{s(n - 2i) + ri} }}{{(xy)^{rn} z^2  - (xy)^{ri} (x^{r(n - 2i)}  + y^{r(n - 2i)} )z + 1}}} \\
&\quad\qquad+\sum\limits_{i = 0}^n {(fg)^i(xy)^{si}\binom ni\frac{{f^{n - 2i} x^{s(n - 2i)}  + g^{n - 2i} y^{s(n - 2i)} }}{{(xy)^{rn} z^2  - (xy)^{ri} (x^{r(n - 2i)}  + y^{r(n - 2i)} )z + 1}}}\,.
\end{split}
\end{equation}
\section{Main results}
\begin{theorem}[\textbf{Sum of Horadam numbers with indices in arithmetic progression}]\label{theorem.ru08riv}
The following identities hold for integers $r$, $k$ and $s$ and arbitrary $z$:
\begin{equation}\label{eq.ysk3t24}
\sum_{j = 0}^k {u_{rj + s}z^j }  = \frac{{q^r u_{rk + s} z^{k + 2}  - u_{rk + r + s} z^{k + 1}  + q^s u_{r - s}z + u_s }}{{q^r z^2  - v_r z + 1}}\,,
\end{equation}
\begin{equation}\label{eq.o00kpj6}
\sum_{j = 0}^k {v_{rj + s} z^j }  = \frac{{q^r v_{rk + s} z^{k + 2}  - v_{rk + r + s} z^{k + 1}  - q^s v_{r-s}z + v_s }}{{q^r z^2  - v_r z + 1}}\,,
\end{equation}
\begin{equation}\label{eq.e9jfrv6}
\sum_{j = 0}^k {w_{rj + s} z^j }  = \frac{{q^r w_{rk + s} z^{k + 2}  - w_{rk + r + s} z^{k + 1}  - q^rw_{s-r}z + w_s }}{{q^r z^2  - v_r z + 1}}\,.
\end{equation}
\end{theorem}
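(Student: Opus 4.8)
The plan is to obtain all three identities from the single algebraic identity of Lemma~\ref{lemma.sliph2a} by specializing its free parameters according to the Binet-like formulas \eqref{eq.djr1ak1} and \eqref{eq.ffgsygd}, together with the elementary relations $\alpha\beta = q$ and $\alpha^r + \beta^r = v_r$ read off from \eqref{eq.dv3pphj} and \eqref{eq.djr1ak1}.

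I would treat the Horadam identity \eqref{eq.e9jfrv6} first, as it is the most direct. Setting $x = \alpha$, $y = \beta$, $f = A$ and $g = B$ in Lemma~\ref{lemma.sliph2a}, the summand $fx^{rj+s} + gy^{rj+s}$ becomes $A\alpha^{rj+s} + B\beta^{rj+s} = w_{rj+s}$ by \eqref{eq.ffgsygd}. Under the same substitutions $(xy)^r = q^r$ and $x^r + y^r = v_r$, so the common denominator collapses to $q^r z^2 - v_r z + 1$. The four numerators simplify in turn to $(xy)^r(fx^{rk+s} + gy^{rk+s}) = q^r w_{rk+s}$ for the $z^{k+2}$ term, $w_{rk+r+s}$ for the $z^{k+1}$ term, $q^r w_{s-r}$ for the linear term, and $w_s$ for the constant term, reproducing \eqref{eq.e9jfrv6} exactly.

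The identities \eqref{eq.ysk3t24} and \eqref{eq.o00kpj6} for $u_n$ and $v_n$ then follow by specialization, since $u_n = w_n(0,1;p,q)$ and $v_n = w_n(2,p;p,q)$ correspond to $(f,g) = \bigl(\tfrac{1}{\alpha-\beta}, -\tfrac{1}{\alpha-\beta}\bigr)$ and $(f,g) = (1,1)$ respectively. Substituting these into the argument above reproduces every term of \eqref{eq.ysk3t24} and \eqref{eq.o00kpj6}, except that the linear coefficients emerge naturally as $-q^r u_{s-r}$ and $-q^r v_{s-r}$ rather than in the stated forms $+q^s u_{r-s}$ and $-q^s v_{r-s}$.

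Reconciling these linear terms is the only step that needs attention, and it is a matter of bookkeeping rather than a genuine obstacle. It is handled by the reflection formulas $u_{-n} = -q^{-n} u_n$ and $v_{-n} = q^{-n} v_n$, both immediate consequences of \eqref{eq.djr1ak1} and $\alpha\beta = q$. Writing $u_{s-r} = -q^{s-r} u_{r-s}$ converts $-q^r u_{s-r}$ into $q^s u_{r-s}$, while $v_{s-r} = q^{s-r} v_{r-s}$ converts $-q^r v_{s-r}$ into $-q^s v_{r-s}$, so the specialized expressions coincide with the claimed ones and the proof is complete.
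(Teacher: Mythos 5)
Your proposal is correct and follows essentially the same route as the paper, which simply sets $(x,y)=(\alpha,\beta)$ in Lemma~\ref{lemma.sliph2a} with $(f,g)=(1,-1)$, $(1,1)$ and $(A,B)$ respectively (your choice of $f=-g=1/(\alpha-\beta)$ for the $u$ case differs only by the harmless overall factor $\alpha-\beta$). You additionally spell out the reflection-formula bookkeeping $u_{s-r}=-q^{s-r}u_{r-s}$ and $v_{s-r}=q^{s-r}v_{r-s}$ needed to match the stated linear terms, which the paper leaves implicit.
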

\begin{proof}
Choose $(x,y)=(\alpha,\beta)$ in Lemma \ref{lemma.sliph2a}, with $(f,g)=(1,-1)$ to get identity \eqref{eq.ysk3t24}, with $(f,g)=(1,1)$ to get identity \eqref{eq.o00kpj6} and with $(f,g)=(A,B)$ to get identity \eqref{eq.e9jfrv6}.
\end{proof}
Using identity \eqref{eq.rrfnlqs}, we have the results stated in Corollary \ref{cor.lxtsw7r}.
\begin{corollary}[\textbf{Generating functions for Lucas sequences and for the Horadam sequence with indices in arithmetic progression}]\label{cor.lxtsw7r}
The following identities hold for integers $r$ and $s$:
\begin{equation}
\sum_{j = 0}^\infty {u_{rj + s}z^j }  = \frac{q^s u_{r - s}z + u_s }{{q^r z^2  - v_r z + 1}}\,,
\end{equation}
\begin{equation}
\sum_{j = 0}^\infty {v_{rj + s} z^j }  = \frac{ - q^s v_{r-s}z + v_s }{{q^r z^2  - v_r z + 1}}\,,
\end{equation}
\begin{equation}
\sum_{j = 0}^\infty {w_{rj + s} z^j }  = \frac{-q^r w_{s-r}z + w_s }{{q^r z^2  - v_r z + 1}}\,.
\end{equation}
\end{corollary}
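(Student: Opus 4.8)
The plan is to obtain all three generating functions as specializations of the infinite-sum identity \eqref{eq.rrfnlqs}, exactly as Theorem \ref{theorem.ru08riv} was obtained from Lemma \ref{lemma.sliph2a}. First I would set $(x,y)=(\alpha,\beta)$ throughout \eqref{eq.rrfnlqs}. By \eqref{eq.dv3pphj} we have $xy=\alpha\beta=q$, so $(xy)^r=q^r$, and by \eqref{eq.djr1ak1} we have $x^r+y^r=\alpha^r+\beta^r=v_r$. Hence the denominator in \eqref{eq.rrfnlqs} collapses to the common denominator $q^rz^2-v_rz+1$ appearing in all three stated identities.

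Next I would run through the three choices $(f,g)=(1,-1),(1,1),(A,B)$ and read off the numerators. With $(f,g)=(1,-1)$ the summand is $\alpha^{rj+s}-\beta^{rj+s}=(\alpha-\beta)u_{rj+s}$ by \eqref{eq.djr1ak1}, so the whole identity carries an overall factor $\alpha-\beta$ that cancels, leaving $\sum_{j\ge0}u_{rj+s}z^j=(u_s-q^ru_{s-r}z)/(q^rz^2-v_rz+1)$. With $(f,g)=(1,1)$ the summand is directly $v_{rj+s}=\alpha^{rj+s}+\beta^{rj+s}$, giving numerator $v_s-q^rv_{s-r}z$. With $(f,g)=(A,B)$ the summand is directly $w_{rj+s}=A\alpha^{rj+s}+B\beta^{rj+s}$ by \eqref{eq.ffgsygd}, giving numerator $w_s-q^rw_{s-r}z$; this already coincides with the displayed form for $(w_n)$, so that case is finished.

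The only real bookkeeping is reconciling the $u$- and $v$-numerators with the displayed forms, which carry $q^su_{r-s}$ and $-q^sv_{r-s}$ in place of $-q^ru_{s-r}$ and $-q^rv_{s-r}$. For this I would invoke the reflection formulas $u_{-n}=-q^{-n}u_n$ and $v_{-n}=q^{-n}v_n$, both immediate from \eqref{eq.djr1ak1} together with $\alpha\beta=q$. Writing $u_{s-r}=u_{-(r-s)}=-q^{-(r-s)}u_{r-s}$ gives $-q^ru_{s-r}=q^su_{r-s}$, and $v_{s-r}=v_{-(r-s)}=q^{-(r-s)}v_{r-s}$ gives $-q^rv_{s-r}=-q^sv_{r-s}$, converting the two numerators into exactly the stated forms.

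I do not expect any genuine obstacle here. Once the finite-$k$ identities of Theorem \ref{theorem.ru08riv} are in hand, the generating functions follow equally well by the same $k\to\infty$ passage already used to pass from Lemma \ref{lemma.sliph2a} to \eqref{eq.rrfnlqs}, namely discarding the $z^{k+1}$ and $z^{k+2}$ terms; the two routes give identical answers. The mildest point worth a remark is convergence: the manipulation is valid as an identity of formal power series in $z$, and as an analytic identity wherever the underlying geometric series converge (for instance $|\alpha^rz|<1$ and $|\beta^rz|<1$), so stating the result in that sense suffices.
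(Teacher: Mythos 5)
Your proposal is correct and follows exactly the paper's route: specialize the infinite-sum identity \eqref{eq.rrfnlqs} at $(x,y)=(\alpha,\beta)$ with $(f,g)=(1,-1),(1,1),(A,B)$, which is all the paper's one-line proof does. Your extra bookkeeping with the reflection formulas $u_{-n}=-q^{-n}u_n$ and $v_{-n}=q^{-n}v_n$ to rewrite $-q^ru_{s-r}$ as $q^su_{r-s}$ and $-q^rv_{s-r}$ as $-q^sv_{r-s}$ is a detail the paper leaves implicit, and you handle it correctly.
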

\begin{theorem}[\textbf{Sums of powers of the terms of Lucas sequences with indices in arithmetic progression}]
The following identities hold for integers $r$, $s$ and $k$, nonnegative integer $n$ and arbitrary $z$:
\begin{equation}\label{eq.fxcbpcf}
\begin{split}
2(p^2 -4q)^n\sum\limits_{j = 0}^k {u_{rj + s}^{2n} z^j }  =& \sum\limits_{i = 0}^{2n} {(-1)^i\binom {2n}iq^{si} \frac{{q^{r(2n + ki)} v_{(rk + s)(2n - 2i)} z^{k + 2}  - q^{ri(k + 1)} v_{(rk + r + s)(2n - 2i)} z^{k + 1} }}{{q^{2rn} z^2  - q^{ri} v_{r(2n - 2i)} z + 1}}}\\ 
&- \sum\limits_{i = 0}^{2n} {(-1)^i\binom {2n}iq^{si} \frac{{q^{s(2n - 2i) + ri} v_{(r - s)(2n - 2i)} z - v_{s(2n - 2i)} }}{{q^{2rn} z^2  - q^{ri} v_{r(2n - 2i)} z + 1}}}\,,
\end{split}
\end{equation}
\begin{equation}\label{eq.y5iki1h}
\begin{split}
2(p^2 -4q)^{n-1}&\sum\limits_{j = 0}^k {u_{rj + s}^{2n - 1} z^j }  \\
=& \sum\limits_{i = 0}^{2n - 1} {(-1)^i\binom {2n - 1}iq^{si} \frac{{q^{r(2n - 1 + ki)} u_{(rk + s)(2n - 1 - 2i)} z^{k + 2}  - q^{ri(k + 1)} u_{(rk + r + s)(2n - 1 - 2i)} z^{k + 1} }}{{q^{{(2n - 1)r}} z^2  - q^{ri} v_{r(2n - 1 - 2i)} z + 1}}}\\ 
&+ \sum\limits_{i = 0}^{2n - 1} {(-1)^i\binom {2n - 1}iq^{si} \frac{{q^{s(2n - 1 - 2i) + ri} u_{(r - s)(2n - 1 - 2i)} z + u_{s(2n - 1 - 2i)} }}{{q^{{(2n - 1)r}} z^2  - q^{ri} v_{r(2n - 1 - 2i)} z + 1}}}\,,
\end{split}
\end{equation}
\begin{equation}\label{eq.hbol99m}
\begin{split}
2\sum\limits_{j = 0}^k {v_{rj + s}^n z^j }  =& \sum\limits_{i = 0}^n {\binom niq^{si} \frac{{q^{r(n + ki)} v_{(rk + s)(n - 2i)} z^{k + 2}  - q^{ri(k + 1)} v_{(rk + r + s)(n - 2i)} z^{k + 1} }}{{q^{rn} z^2  - q^{ri} v_{r(n - 2i)} z + 1}}}\\ 
&- \sum\limits_{i = 0}^n {\binom niq^{si} \frac{{q^{s(n - 2i) + ri} v_{(r - s)(n - 2i)} z - v_{s(n - 2i)} }}{{q^{rn} z^2  - q^{ri} v_{r(n - 2i)} z + 1}}}\,.
\end{split}
\end{equation}
\end{theorem}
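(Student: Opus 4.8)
The plan is to obtain all three identities as specializations of the master identity in Lemma~\ref{lemma.l8djsvx} under the single substitution $(x,y)=(\alpha,\beta)$. Writing $\Delta=\alpha-\beta=\sqrt{p^2-4q}$, the Binet formulas \eqref{eq.djr1ak1} exhibit both Lucas sequences in the exact form $fx^{rj+s}+gy^{rj+s}$ handled by the Lemma: one has $u_{rj+s}=f\alpha^{rj+s}+g\beta^{rj+s}$ with $(f,g)=(1/\Delta,-1/\Delta)$, and $v_{rj+s}=f\alpha^{rj+s}+g\beta^{rj+s}$ with $(f,g)=(1,1)$. Thus I would derive \eqref{eq.fxcbpcf} and \eqref{eq.y5iki1h} by taking the Lemma's exponent to be $2n$ and $2n-1$ respectively with $(f,g)=(1/\Delta,-1/\Delta)$, and \eqref{eq.hbol99m} by taking the exponent $n$ with $(f,g)=(1,1)$. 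No new summation is required; the whole proof is substitution followed by simplification.

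First I would reduce the common denominator. With $(x,y)=(\alpha,\beta)$ we have $xy=\alpha\beta=q$ and $x^{r(n-2i)}+y^{r(n-2i)}=v_{r(n-2i)}$, so every denominator $(xy)^{rn}z^2-(xy)^{ri}(x^{r(n-2i)}+y^{r(n-2i)})z+1$ collapses to $q^{rn}z^2-q^{ri}v_{r(n-2i)}z+1$, together with its analogues obtained by replacing $n$ with $2n$ or $2n-1$; these are precisely the stated denominators.

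Next I would treat the four numerator groups, each of the shape $f^{m}\alpha^{A}\beta^{B}+g^{m}\alpha^{B}\beta^{A}$ with $m=n-2i$ (or its $2n$, $2n-1$ analogue). For $v$, where $f=g=1$, this is the symmetric sum $\alpha^{A}\beta^{B}+\alpha^{B}\beta^{A}=q^{A}v_{B-A}=q^{B}v_{A-B}$. For the even power $u^{2n}$, where $m=2n-2i$ is even, one has $f^{m}=g^{m}=\Delta^{-m}$, so the same symmetric sum reappears carrying a factor $\Delta^{-m}$. For the odd power $u^{2n-1}$, where $m=2n-1-2i$ is odd, one has $g^{m}=-f^{m}=-\Delta^{-m}$, turning the group into the antisymmetric combination $\Delta^{-m}(\alpha^{A}\beta^{B}-\alpha^{B}\beta^{A})=\Delta^{-m}q^{B}\Delta\,u_{A-B}$. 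The prefactor $(fg)^i(xy)^{si}=(-1)^i\Delta^{-2i}q^{si}$ then combines with these $\Delta$-powers: in the even case $\Delta^{-2i}\Delta^{-(2n-2i)}=(p^2-4q)^{-n}$, and in the odd case $\Delta^{-2i}\Delta^{-(2n-1-2i)}\Delta=(p^2-4q)^{-(n-1)}$. Moving these scalar factors to the left reproduces exactly the factors $2(p^2-4q)^n$ and $2(p^2-4q)^{n-1}$ and the residual signs $(-1)^i$ of \eqref{eq.fxcbpcf} and \eqref{eq.y5iki1h}, the $2$ being inherited from the left-hand side of the Lemma.

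The main obstacle is the exponent-and-sign bookkeeping across the four numerator groups. For each group I must read off $A$ and $B$ from the Lemma and compute $A-B$, checking that it equals $(rk+s)(n-2i)$, $(rk+r+s)(n-2i)$, and $(s-r)(n-2i)$ for the $z^{k+2}$, $z^{k+1}$, and $z$ terms (with $n$ replaced as appropriate), while the fourth group is already $\alpha^{s(n-2i)}\pm\beta^{s(n-2i)}$ and needs no cross-factoring. Choosing which of $A,B$ to factor out is what controls the sign of the index: I would use $q^{A}v_{B-A}=q^{B}v_{A-B}$ (equivalently $v_{-t}=q^{-t}v_t$) for the $v$-type groups and $q^{A}\Delta u_{B-A}=-q^{B}\Delta u_{A-B}$ (equivalently $u_{-t}=-q^{-t}u_t$) for the $u$-type groups, so as to display the indices $v_{(r-s)(n-2i)}$ and $u_{(r-s)(\cdot)}$ exactly as written, with $q$-powers $q^{r(n+ki)}$, $q^{ri(k+1)}$, $q^{s(n-2i)+ri}$ emerging. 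The one delicate sign to monitor is the $z$-term in the odd-$u$ case: the antisymmetric group there carries an extra minus relative to the symmetric case, and this minus must cancel the leading $-z$ of the Lemma so that the term enters \eqref{eq.y5iki1h} with a $+$ sign, as stated.
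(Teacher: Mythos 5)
Your proposal is correct and follows essentially the same route as the paper: both obtain all three identities by specializing Lemma~\ref{lemma.l8djsvx} at $(x,y)=(\alpha,\beta)$ and simplifying with $\alpha\beta=q$, $\alpha^t+\beta^t=v_t$ and $\alpha^t-\beta^t=(\alpha-\beta)u_t$. The only (immaterial) difference is that the paper takes $(f,g)=(1,-1)$ for the $u$-identities, so the factor $(p^2-4q)^n$ (resp.\ $(p^2-4q)^{n-1}$) arises directly on the left from $(\alpha^{rj+s}-\beta^{rj+s})^m=\Delta^m u_{rj+s}^m$, whereas you take $(f,g)=(1/\Delta,-1/\Delta)$ and transfer the same powers of $\Delta$ from the right-hand side.
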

\begin{proof}
Set $(x,y)=(\alpha,\beta)$ and $(f,g)=(1,-1)$ in Lemma \ref{lemma.l8djsvx} to obtain identities \eqref{eq.fxcbpcf} and \eqref{eq.y5iki1h}; and $(f,g)=(1,1)$ to get identity \eqref{eq.hbol99m}.
\end{proof}
In particular, we have
\begin{equation}\label{eq.ooa3gtf}
\begin{split}
2(p^2 -4q)^n\sum\limits_{j = 0}^k {u_j^{2n} z^j }  &= \sum\limits_{i = 0}^{2n} {(-1)^i\binom {2n}i \frac{{q^{2n + ki} v_{k(2n - 2i)} z^{k + 2}  - q^{i(k + 1)} v_{(k + 1)(2n - 2i)} z^{k + 1} }}{{q^{2n} z^2  - q^i v_{2n - 2i} z + 1}}}\\ 
&- \sum\limits_{i = 0}^{2n} {(-1)^i\binom {2n}i \frac{{q^{i} v_{2n - 2i} z - 2 }}{{q^{2n} z^2  - q^i v_{2n - 2i} z + 1}}}\,,
\end{split}
\end{equation}

\begin{equation}\label{eq.fattk27}
\begin{split}
2(p^2 -4q)^{n-1}&\sum\limits_{j = 0}^k {u_j^{2n - 1} z^j }  \\
=& \sum\limits_{i = 0}^{2n - 1} {(-1)^i\binom {2n - 1}i \frac{{q^{2n - 1 + ki} u_{k(2n - 1 - 2i)} z^{k + 2}  - q^{i(k + 1)} u_{(k + 1)(2n - 1 - 2i)} z^{k + 1} }}{{q^{{2n - 1}} z^2  - q^i v_{2n - 1 - 2i} z + 1}}}\\ 
&+ \sum\limits_{i = 0}^{2n - 1} {(-1)^i\binom {2n - 1}i \frac{{q^{i} u_{2n - 1 - 2i} z}}{{q^{{2n - 1}} z^2  - q^i v_{2n - 1 - 2i} z + 1}}}\,,
\end{split}
\end{equation}

\begin{equation}
\begin{split}
2\sum\limits_{j = 0}^k {v_j^n z^j }  =& \sum\limits_{i = 0}^n {\binom ni \frac{{q^{n + ki} v_{k(n - 2i)} z^{k + 2}  - q^{i(k + 1)} v_{(k + 1)(n - 2i)} z^{k + 1} }}{{q^{n} z^2  - q^i v_{n - 2i} z + 1}}}\\ 
&- \sum\limits_{i = 0}^n {\binom ni \frac{{q^{i} v_{n - 2i} z - 2 }}{{q^{n} z^2  - q^i v_{n - 2i} z + 1}}}\,.
\end{split}
\end{equation}
St\u anic\u a \cite{stanica03} obtained results which may be considered equivalent to \eqref{eq.ooa3gtf} and \eqref{eq.fattk27}. 

\medskip

Dropping terms proportional to $z^k$, in the limit as $k$ approaches infinity,  we obtain the generating fuctions given in Corollary \ref{cor.ki2vdrz}.
\begin{corollary}[\textbf{Generating functions for powers of the terms of Lucas sequences with indices in arithmetic progression}]\label{cor.ki2vdrz}
The following identities hold for integers $r$ and $s$ and nonnegative integer $n$:
\begin{equation}
\begin{split}
2(p^2 -4q)^n\sum\limits_{j = 0}^\infty {u_{rj + s}^{2n} z^j }= \sum\limits_{i = 0}^{2n} {(-1)^i\binom {2n}iq^{si} \frac{{-q^{s(2n - 2i) + ri} v_{(r - s)(2n - 2i)} z + v_{s(2n - 2i)} }}{{q^{2rn} z^2  - q^{ri} v_{r(2n - 2i)} z + 1}}}\,,
\end{split}
\end{equation}
\begin{equation}
\begin{split}
2(p^2 -4q)^{n-1}&\sum\limits_{j = 0}^\infty {u_{rj + s}^{2n - 1} z^j }=\sum\limits_{i = 0}^{2n - 1} {(-1)^i\binom {2n - 1}iq^{si} \frac{{q^{s(2n - 1 - 2i) + ri} u_{(r - s)(2n - 1 - 2i)} z + u_{s(2n - 1 - 2i)} }}{{q^{{(2n - 1)r}} z^2  - q^{ri} v_{r(2n - 1 - 2i)} z + 1}}}\,,
\end{split}
\end{equation}
\begin{equation}
\begin{split}
2\sum\limits_{j = 0}^\infty {v_{rj + s}^n z^j }  =\sum\limits_{i = 0}^n {\binom niq^{si} \frac{{-q^{s(n - 2i) + ri} v_{(r - s)(n - 2i)} z + v_{s(n - 2i)} }}{{q^{rn} z^2  - q^{ri} v_{r(n - 2i)} z + 1}}}\,.
\end{split}
\end{equation}
\end{corollary}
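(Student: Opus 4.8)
The plan is to obtain the corollary as the $k\to\infty$ limit of the three finite-sum identities \eqref{eq.fxcbpcf}, \eqref{eq.y5iki1h} and \eqref{eq.hbol99m} of the preceding theorem. In each of those identities the right-hand side splits into a \emph{boundary} piece carrying the factors $z^{k+1}$ and $z^{k+2}$ (the first summation) and a \emph{bulk} piece consisting of the $z$-term and the constant term (the second summation), the latter free of $k$ except through the summation range. First I would argue that, for $z$ in the disk of convergence of the corresponding generating function, the boundary piece tends to $0$ as $k\to\infty$; what survives is exactly the bulk piece, and reading it off in each of the three cases yields the three displayed generating functions. Equivalently — and this is the route I would actually write out — one repeats the proof of the finite theorem verbatim but starts from the infinite-sum identity \eqref{eq.yjgu05n} in place of Lemma \ref{lemma.l8djsvx}: set $(x,y)=(\alpha,\beta)$ together with $(f,g)=(1,-1)$ for the two $u$-identities and $(f,g)=(1,1)$ for the $v$-identity.

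The substitution $(x,y)=(\alpha,\beta)$ immediately sends every denominator of \eqref{eq.yjgu05n} to $q^{rN}z^2-q^{ri}v_{r(N-2i)}z+1$, since $(xy)^{\bullet}=q^{\bullet}$ and $\alpha^{r(N-2i)}+\beta^{r(N-2i)}=v_{r(N-2i)}$, where $N$ denotes the power in \eqref{eq.yjgu05n}. For the numerators I would write $m=N-2i$ and reduce the two surviving numerator pairs using the Binet relations $\alpha\beta=q$, $\alpha^m+\beta^m=v_m$ and $\alpha^m-\beta^m=(\alpha-\beta)u_m$; the key collapses are $\alpha^{sm}\beta^{rm}+\alpha^{rm}\beta^{sm}=q^{sm}v_{(r-s)m}$ and $\alpha^{sm}\beta^{rm}-\alpha^{rm}\beta^{sm}=-q^{sm}(\alpha-\beta)u_{(r-s)m}$, together with $\alpha^{sm}+\beta^{sm}=v_{sm}$ and $\alpha^{sm}-\beta^{sm}=(\alpha-\beta)u_{sm}$. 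For $(f,g)=(1,1)$ the base of the power is $v_{rj+s}$ and both of the first two identities enter with a plus sign, producing the $v$-valued numerators of the third display directly. For $(f,g)=(1,-1)$ the base is $(\alpha-\beta)u_{rj+s}$, so an overall factor $(\alpha-\beta)^N$ appears on the left, and the sign $g^{N-2i}=(-1)^N$ controls the parity split described next.

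The parity bookkeeping is the one step needing genuine attention, since it is exactly what separates the $u$ (odd) from the $v$ (even) output. When $N=2n$ is even, $(\alpha-\beta)^N=(p^2-4q)^n$ and $(-1)^N=+1$, so each numerator pair combines into a $v$-value and one obtains the $u^{2n}$-identity with the clean prefactor $(p^2-4q)^n$. When $N=2n-1$ is odd, $(\alpha-\beta)^N=(p^2-4q)^{n-1}(\alpha-\beta)$ and $(-1)^N=-1$, so each numerator pair combines into an $(\alpha-\beta)u$-value; the stray factor $\alpha-\beta$ then cancels against the one left over on the left, leaving the $u^{2n-1}$-identity with prefactor $(p^2-4q)^{n-1}$ and $u$-valued numerators. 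The only remaining point is analytic rather than algebraic: the coefficients of $z^{k+1}$ and $z^{k+2}$ in the finite identities grow like $q^{rik}\alpha^{r(N-2i)k}$ (through the factors $q^{ki}$ and $v_{(rk+\cdots)(N-2i)}$), so the vanishing of the boundary piece must be invoked only for $|z|$ below the radius of convergence $\lvert\alpha\rvert^{-rN}$ of the series — or, reading the identities as formal power series, one simply observes that the boundary terms cease to affect any fixed coefficient once $k$ is large enough. With that settled, the three generating functions follow with no further computation.
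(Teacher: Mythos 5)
Your proposal is correct and matches the paper's approach: the paper obtains the corollary exactly by dropping the $z^{k+1}$ and $z^{k+2}$ terms of identities \eqref{eq.fxcbpcf}--\eqref{eq.hbol99m} in the limit $k\to\infty$ (equivalently, by specializing \eqref{eq.yjgu05n} with the same choices of $(f,g)$ and $(x,y)=(\alpha,\beta)$). Your added remarks on the radius of convergence and the formal-power-series reading are more careful than the paper, which passes to the limit without comment.
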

We have, in particular,
\begin{equation}\label{eq.ya5rhgv}
\begin{split}
2(p^2 -4q)^n\sum\limits_{j = 0}^\infty {u_j^{2n} z^j }= \sum\limits_{i = 0}^{2n} {(-1)^i\binom {2n}i \frac{{-q^{i} v_{2n - 2i} z + 2 }}{{q^{2n} z^2  - q^{i} v_{2n - 2i} z + 1}}}\,,
\end{split}
\end{equation}
\begin{equation}
\begin{split}
2(p^2 -4q)^{n-1}&\sum\limits_{j = 0}^\infty {u_j^{2n - 1} z^j }=\sum\limits_{i = 0}^{2n - 1} {(-1)^i\binom {2n - 1}i \frac{{q^{i} u_{2n - 1 - 2i} z}}{{q^{{(2n - 1)}} z^2  - q^{i} v_{(2n - 1 - 2i)} z + 1}}}\,,
\end{split}
\end{equation}
\begin{equation}\label{eq.vofjxdm}
\begin{split}
2\sum\limits_{j = 0}^\infty {v_j^n z^j }  =\sum\limits_{i = 0}^n {\binom ni \frac{{-q^{i} v_{n - 2i} z + 2 }}{{q^{n} z^2  - q^{i} v_{(n - 2i)} z + 1}}}\,.
\end{split}
\end{equation}
Popov \cite{popov77} obtained results equivalent to \eqref{eq.ya5rhgv} --- \eqref{eq.vofjxdm}.
\begin{theorem}[\textbf{Sum of powers of the terms of the Horadam sequence with indices in arithmetic progression}]
The following identity holds for integers $r$, $s$ and $k$, nonnegative integer $n$ and arbitrary $z$:
\[
\begin{split}
&2\sum\limits_{j = 0}^k {w_{rj+s}^n z^j } \\
&\quad=z^{k + 2}\sum\limits_{i = 0}^n {(AB)^iq^{si}\binom ni\frac{{A^{n - 2i} \alpha^{s(n - 2i) + rn(k + 1) - rik} \beta^{r(ik + n)}  + B^{n - 2i} \alpha^{r(ik + n)} \beta^{s(n - 2i) + rn(k + 1) - rik} }}{{q^{rn} z^2  - q^{ri} v_{r(n - 2i)}z + 1}} } \\
&\qquad-z^{k + 1}\sum\limits_{i = 0}^n {(AB)^iq^{si}\binom ni\frac{{A^{n - 2i} \alpha^{s(n - 2i) + (rn - ri)(k + 1)} \beta^{ri(k + 1)}  + B^{n - 2i} \alpha^{ri(k + 1)} \beta^{s(n - 2i) + (rn - ri)(k + 1)} }}{{q^{rn} z^2  - q^{ri} v_{r(n - 2i)}z + 1}} } \\
&\qquad-z\sum\limits_{i = 0}^n {(AB)^iq^{si}\binom ni\frac{{A^{n - 2i} \alpha^{s(n - 2i) + ri} \beta^{r(n - i)}  + B^{n - 2i} \alpha^{r(n - i)} \beta^{s(n - 2i) + ri} }}{{q^{rn} z^2  - q^{ri} v_{r(n - 2i)}z + 1}}} \\
&\quad\qquad+\sum\limits_{i = 0}^n {(AB)^iq^{si}\binom ni\frac{{A^{n - 2i} \alpha^{s(n - 2i)}  + B^{n - 2i} \beta^{s(n - 2i)} }}{{q^{rn} z^2  - q^{ri} v_{r(n - 2i)}z + 1}}}\,.
\end{split}
\]
\end{theorem}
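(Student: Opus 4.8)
The plan is to recognize this identity as the direct specialization of Lemma~\ref{lemma.l8djsvx} to the Horadam sequence. By the Binet-like formula \eqref{eq.ffgsygd}, we have $w_{rj+s} = A\alpha^{rj+s} + B\beta^{rj+s}$, so the summand $w_{rj+s}^n$ is exactly of the form $(fx^{rj+s} + gy^{rj+s})^n$ under the identifications $(f,g) = (A,B)$ and $(x,y) = (\alpha,\beta)$. First I would substitute these four choices into the statement of Lemma~\ref{lemma.l8djsvx}, replacing every occurrence of $f$, $g$, $x$, $y$ by $A$, $B$, $\alpha$, $\beta$ respectively throughout all four sums, exactly as is done for the single-power case in the proof of Theorem~\ref{theorem.ru08riv}.

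The remaining work is purely a simplification of the factors that depend on the product $xy$. Using $\alpha\beta = q$ from \eqref{eq.dv3pphj}, the prefactor $(xy)^{si}$ becomes $q^{si}$, while in each denominator the term $(xy)^{rn}z^2$ becomes $q^{rn}z^2$ and $(xy)^{ri}$ becomes $q^{ri}$. The only other denominator factor is $x^{r(n-2i)} + y^{r(n-2i)} = \alpha^{r(n-2i)} + \beta^{r(n-2i)}$, which by the Binet formula for the Lucas sequence of the second kind in \eqref{eq.djr1ak1} equals $v_{r(n-2i)}$. Hence every denominator $(xy)^{rn}z^2 - (xy)^{ri}(x^{r(n-2i)} + y^{r(n-2i)})z + 1$ collapses to $q^{rn}z^2 - q^{ri}v_{r(n-2i)}z + 1$, matching the theorem.

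The numerators require no further reduction: each is a sum of two terms of the shape $A^{n-2i}\alpha^{(\cdots)}\beta^{(\cdots)}$ and $B^{n-2i}\alpha^{(\cdots)}\beta^{(\cdots)}$, and because the coefficients $A$ and $B$ generally differ and the exponents of $\alpha$ and $\beta$ are generally unequal, these cannot be folded into a single Lucas symbol the way the denominators can. They are therefore left as written, and the four sums of the lemma become verbatim the four sums of the theorem.

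I expect no genuine obstacle here; the entire argument is a substitution followed by two Binet-type simplifications. The only point demanding care is bookkeeping: one must check that the exponent expressions appearing in all four numerators---such as $s(n-2i)+rn(k+1)-rik$ and $r(ik+n)$---are carried over unchanged under the substitution $(x,y)\mapsto(\alpha,\beta)$, since these exponents involve only the integers $r$, $s$, $n$, $i$, $k$ and are untouched by the identities $\alpha\beta=q$ and $\alpha^m+\beta^m=v_m$ used to reduce the denominators.
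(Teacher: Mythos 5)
Your proposal is correct and is essentially identical to the paper's own proof, which likewise obtains the theorem by setting $(x,y)=(\alpha,\beta)$ and $(f,g)=(A,B)$ in Lemma \ref{lemma.l8djsvx}. The extra detail you supply---reducing $(xy)^{si}$, $(xy)^{rn}$, $(xy)^{ri}$ via $\alpha\beta=q$ and $\alpha^{m}+\beta^{m}=v_{m}$ in the denominators---is exactly the (unstated) simplification the paper relies on.
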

\begin{proof}
Set $(x,y)=(\alpha,\beta)$ and $(f,g)=(A,B)$ in Lemma \ref{lemma.l8djsvx}.
\end{proof}
\begin{corollary}[\textbf{Generating function for powers of the terms of the Horadam sequence with indices in arithmetic progression}]
The following identity holds for integers $r$, $s$ and $k$, nonnegative integer $n$ and arbitrary $z$:
\[
\begin{split}
2\sum\limits_{j = 0}^\infty {w_{rj+s}^n z^j }&=-z\sum\limits_{i = 0}^n {(AB)^iq^{si}\binom ni\frac{{A^{n - 2i} \alpha^{s(n - 2i) + ri} \beta^{r(n - i)}  + B^{n - 2i} \alpha^{r(n - i)} \beta^{s(n - 2i) + ri} }}{{q^{rn} z^2  - q^{ri} v_{r(n - 2i)}z + 1}}} \\
&\qquad\qquad+\sum\limits_{i = 0}^n {(AB)^iq^{si}\binom ni\frac{{A^{n - 2i} \alpha^{s(n - 2i)}  + B^{n - 2i} \beta^{s(n - 2i)} }}{{q^{rn} z^2  - q^{ri} v_{r(n - 2i)}z + 1}}}\,.
\end{split}
\]

\end{corollary}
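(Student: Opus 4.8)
The plan is to obtain this generating function as the specialization $(x,y)=(\alpha,\beta)$, $(f,g)=(A,B)$ of the already-established infinite-sum identity \eqref{eq.yjgu05n}, in exact parallel with the way the finite-sum counterpart (the preceding theorem) was derived from Lemma \ref{lemma.l8djsvx}. First I would invoke the Binet-like formula \eqref{eq.ffgsygd}, which gives $w_{rj+s}=A\alpha^{rj+s}+B\beta^{rj+s}$; hence $\sum_{j=0}^\infty w_{rj+s}^n z^j$ is precisely the left-hand side of \eqref{eq.yjgu05n} under the substitution $x\to\alpha$, $y\to\beta$, $f\to A$, $g\to B$, and the whole task reduces to simplifying the coefficients this substitution produces.

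The second step is that simplification. From \eqref{eq.dv3pphj} we have $\alpha\beta=q$, so every factor $(xy)^{\bullet}$ collapses to the corresponding power of $q$: in particular $(xy)^{si}=q^{si}$, $(xy)^{rn}=q^{rn}$ and $(xy)^{ri}=q^{ri}$. Moreover, the Binet formula \eqref{eq.djr1ak1} for the Lucas sequence of the second kind gives $\alpha^{r(n-2i)}+\beta^{r(n-2i)}=v_{r(n-2i)}$, so each common denominator $(xy)^{rn}z^2-(xy)^{ri}(x^{r(n-2i)}+y^{r(n-2i)})z+1$ becomes $q^{rn}z^2-q^{ri}v_{r(n-2i)}z+1$. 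Feeding these into the numerators of \eqref{eq.yjgu05n} replaces each $f^{n-2i}x^{\bullet}y^{\bullet}$ and $g^{n-2i}x^{\bullet}y^{\bullet}$ by the corresponding $A^{n-2i}\alpha^{\bullet}\beta^{\bullet}$ and $B^{n-2i}\alpha^{\bullet}\beta^{\bullet}$, reproducing the two stated sums term by term.

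There is no genuine obstacle here: the corollary is a direct relabelling of an identity already proved in full generality, so the ``hard part'' is merely the exponent bookkeeping in the numerators, which is routine. The one point deserving a word is the validity of the limiting passage. Equation \eqref{eq.yjgu05n} was itself obtained from Lemma \ref{lemma.l8djsvx} by letting $k\to\infty$ and discarding the families of terms carrying $z^{k+1}$ and $z^{k+2}$; these drop out formally, and analytically they vanish for $|z|$ small enough that the underlying geometric series \eqref{eq.ahuskzm}--\eqref{eq.o3wjsin} (hence the binomial expansion in Lemma \ref{lemma.l8djsvx}) converge. Equivalently, one may pass to the limit $k\to\infty$ directly in the preceding theorem, where the substitution $(x,y,f,g)=(\alpha,\beta,A,B)$ has already been carried out, and simply delete the $z^{k+1}$ and $z^{k+2}$ sums; this leaves the two remaining sums verbatim, completing the proof.
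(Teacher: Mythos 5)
Your proposal is correct and matches the paper's own (implicit) argument exactly: the corollary is obtained by specializing \eqref{eq.yjgu05n} with $(x,y)=(\alpha,\beta)$, $(f,g)=(A,B)$, or equivalently by dropping the $z^{k+1}$ and $z^{k+2}$ terms from the preceding theorem in the limit $k\to\infty$. Your added remark on the convergence condition for small $|z|$ is a point the paper leaves tacit, but the route is the same.
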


\hrule

\noindent 2010 {\it Mathematics Subject Classification}:
Primary 11B39; Secondary 11B37.

\noindent \emph{Keywords: }
Horadam sequence, Fibonacci number, Lucas number, Lucas sequence, summation identity, generating function, partial sum.

\hrule



\end{document}